\documentclass{amsart}

\usepackage{latexsym}
\usepackage{amsfonts}
\usepackage{amsmath}
\usepackage{amssymb,amsxtra,amscd}
\usepackage{mathrsfs}
\usepackage{enumerate}

\newcommand{\R}{\mathbb{R}}

\newcommand{\N}{\mathbb{N}}

\newcommand{\K}{\mathbb{K}}

\newcommand{\CC}{\mathscr{C}}

\newcommand{\SSa}{\mathscr{S}}

\newtheorem{theorem}{Theorem}

\newtheorem{corollary}[theorem]{Corollary}

\newtheorem{example}[theorem]{Example}

\title{Strongly continuous semigroups on some Fr\'echet spaces}

\author[L.\ Frerick]{Leonhard Frerick}
\address{Fachbereich IV -- Mathematik, Universit\"{a}t Trier, D-54286 Trier, Germany}
\email{frerick@uni-trier.de}

\author[E.\ Jord\'a]{Enrique Jord\'a}

\thanks{The research was partially done during a stay of the fourth named author at EPSA-UPV. This visit was supported by Proyecto Prometeo II/2013/013.
The research of the first and second named authors was supported by MICINN and FEDER, Project MTM2010–15200.
The research of the second named author was partially supported by Programa de Apoyo a la Investigación y Desarrollo de la UPV PAID-06-12. }


\address{Departamento de Matem\'atica Aplicada, E. Polit\'{e}cnica Superior
de Alcoy, Universidad Polit\'ecnica de Valencia, Plaza Ferr\'andiz
y Carbonell 2, E-03801 Alcoy (Alicante), Spain}
\email{ejorda@mat.upv.es}

\author[Th.\ Kalmes]{Thomas Kalmes}
\address{Fachbereich IV -- Mathematik, Universit\"{a}t Trier, D-54286 Trier, Germany}
\email{kalmesth@uni-trier.de}

\author[J.\ Wengenroth]{Jochen Wengenroth}
\address{Fachbereich IV -- Mathematik, Universit\"{a}t Trier, D-54286 Trier, Germany}
\email{wengenroth@uni-trier.de}

\begin{document}

\begin{abstract}
We prove that for a strongly continuous semigroup $T$ on the Fr\'echet space $\omega$ of all scalar sequences, its generator is
a continuous linear operator $A\in L(\omega)$ and that, for all $x\in\omega$ and $t\geq 0$,
the series $\exp(tA)(x)=\sum\limits_{k =0}^\infty\frac{t^k }{k !}A^k  (x)$ converges to $T_t(x)$.
This solves a problem posed by Conejero. Moreover, we improve recent results of Albanese, Bonet, and Ricker about semigroups on
strict projective limits of Banach spaces.
\end{abstract}

\maketitle

\section{Introduction}

In \cite{Conejero} Conejero asked whether on $\omega=\K^\N$ every strongly
continuous semigroup $T$ is of the form $T_t(x)=\sum\limits_{k =0}^\infty\frac{t^k }{k  !}A^k (x)$
for a continuous linear operator $A$ on $\omega$. This question arose in the
context of hypercyclicity: It is shown in \cite[Theorem 2.7]{Conejero} that no such semigroup
on $\omega$ can be hypercyclic. Although Shkarin \cite{Shkarin} proved 
that there are not even supercyclic  strongly continuous semigroups on $\omega$,
the question of Conejero remained open. Only a partial answer is contained in \cite{AlBoRi}.

\medskip

The definitions and most basic results for semigroups on locally convex spaces $X$ are
the same as for Banach spaces, we refer to \cite{Komatsu,Komura,Ouchi,Yosida}. A strongly continuous semigroup
$T$ on $X$ is thus a morphism from the semigroup $([0,\infty),+)$ to that of continuous linear
operators $(L(X),\circ)$ such that all orbits $t\mapsto T_t(x)$ are continuous.
If the convergence $T_t \to id_X$ for $t\to 0$ is uniform on bounded subsets of $X$, the semigroup is called uniformly continuous.

 $T$ is called
locally equicontinuous if
$\{T(s): 0\leq s\leq t\}$ is equicontinuous for every $t>0$, i.e., for every continuous seminorm $p$ on $X$ there is another continuous seminorm $q$ on $X$
such that $p(T_s(x)) \leq q(x)$ for all $x\in X$ and $0\leq s\leq t$. On barrelled spaces,
every strongly continuous semigroup is already locally equicontinuous because of the Banach-Steinhaus theorem.
The (infinitesimal) generator $(A,D(A))$ of a strongly continuous semigroup on a locally convex
space is defined as in the Banach space setting as the derivative of the orbit at $0$.

\medskip

Although under rather weak assumptions (sequential completeness to have a vector valued integral and barrelledness to apply the uniform boundedness principle)
many results from the Banach space setting carry over to strongly continuous semigroups on locally convex spaces there are some crucial differences because
the exponential series $\exp(A)(x)=\sum\limits_{k=0}^\infty A^k( x)/k!$ need not converge for continuous linear operators. Therefore, one does not always have 
the familiar representation $T_t=\exp(tA)$ for semigroups with continuous generators:

\begin{example} \rm
Consider $\CC^\infty(\R)$ with its usual topology and the strongly continuous semigroup defined by $T_t(f)(x)=f(x+t)$.
Then $D(A)=\CC^\infty(\R)$ and $A(f)=f'$. For any $f$ which is flat at the origin but does not vanish on $(0,\infty)$ the
series $\exp(tA)(f)=\sum\limits_{k =0}^\infty\frac{t^k }{k  !} f^{(k )}$ cannot converge to $T_t(f)$ because for
$t>0$ with $f(t)\neq 0$ we have
\[
f(t)=T_{t}(f)(0)\neq \sum_{k =0}^\infty\frac{t^k }{k  !} f^{(k )}(0)=0.
\] 
Using E.\ Borel's theorem, that a smooth function can have any given sequence of derivatives at $0$, 
one also gets $f\in \CC^\infty(\R)$ such that the
exponential series diverges.
\end{example}

Answering Conjero's question, we will prove that such phenomena do not occur on the Fr\'echet space $\omega$.

\section{Semigroups on strict projective limits}

Let $X_n$ be Banach spaces with norms $\|\cdot\|_n$, $\pi_m^n:X_m \to X_n$ norm decreasing operators for $n\le m\in\N$ with
$\pi_m^n\circ \pi_k^m=\pi_k^n$ as well as $\pi_n^n=id_{X_n}$, and 
\[
X=\{(x_n)_{n\in\N}\in\prod_{n\in\N} X_n: \pi_m^n(x_m)=x_n \text{ for all } n\le m\}
\]
 its projective limit.
Every Fr\'echet space has such a representation, and $X$ is called a {\em quojection} if there is a representation
with surjective (hence open) $\pi_m^n$. Countable products of Banach spaces are of this form, in particular
spaces like $L^p_{loc}(\Omega)$ of locally $p$-integrable functions or $\CC^n(\Omega)$ for open sets $\Omega\subseteq \R^d$ and $n\in\N_0$, but there are 
quojections which are not isomorphic to a product.

If $X$ has a strict representation as above then, by a simple induction, $\pi_m:X\to X_n$, $(x_n)_{n\in\N}\mapsto x_m$ are also surjective.
Applying this observation to the spaces $\ell^\infty_I(X_n)$ of bounded functions $I\to X_n$ one obtains that
$\pi_m$ lifts bounded sets, that is, there are bounded sets $D_n\subseteq X$ such that $\pi_n(D_n)$ contains the unit ball $B_n$
of the Banach space $X_n$. This lifting property was first proved in \cite{DiZa}.

The following theorem improves results of Albanese, Bonet, and Ricker  \cite{AlBoRi}  who showed its first part under restrictive additional assumptions.
Moreover, their proposition 3.2 shows that the strong assumption on the space $X$ in our theorem cannot be relaxed too much: On every nuclear K\"othe space 
(and even every locally convex 
space containing a complemented copy of such a space) there are equicontinuous, uniformly continuous semigroups whose generator is not everywhere
defined and discontinuous.

\pagebreak[4]

\begin{theorem}\label{main result} Let $X$ be a quojection.
\begin{enumerate}[(1)]
\item  If $T$ is a uniformly continuous semigroup on $X$ then its generator is continuous and everywhere
	defined, and for all $x\in X$ and $t\geq 0$ we have
	\[T_t(x)=\exp(tA)(x)=\sum_{k =0}^\infty\frac{t^k }{k  !}A^k ( x).\] 

\item $A\in L(X)$ generates a strongly continuous semigroup if and only if
	\[\forall\,n\in\N\;\exists\, m\in\N\;\forall\,k \in\N_0:\; \pi_m(x)=0 \;\Longrightarrow\; \pi_n(A^k (x))=0.\]
Then the generated semigroup is even uniformly continuous.
\end{enumerate}
\end{theorem}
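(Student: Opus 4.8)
The plan is to prove part (2) first and then obtain part (1) by combining it with a separate argument showing that the generator of a uniformly continuous semigroup on a quojection is everywhere defined. Throughout fix a representation $X=\varprojlim(X_n,\pi^n_m)$ with all $\pi^n_m$, hence all $\pi_n\colon X\to X_n$, surjective, and recall that the $\pi_n$ lift bounded sets. For $A\in L(X)$ and $n\in\N$ set
\[
N_n:=\bigcap_{k\ge0}A^{-k}(\ker\pi_n),
\]
the largest closed $A$-invariant subspace contained in $\ker\pi_n$. The condition in (2) says precisely that for every $n$ there is $m$ with $\ker\pi_m\subseteq N_n$; when this holds, $X/N_n$ is a \emph{Banach} space (a quotient of $X_m=X/\ker\pi_m$ by a closed subspace), $A$ induces $\widehat A_n\in L(X/N_n)$, and the canonical surjection $q_n\colon X\to X/N_n$ satisfies $q_nA^k=\widehat A_n^{\,k}q_n$ and $\pi_n=\rho_nq_n$ for a continuous $\rho_n\colon X/N_n\to X_n$; since $\|q_nx\|\le p_m(x)$ and $p_n(x)\le\|\rho_n\|\,\|q_nx\|$, the seminorms $x\mapsto\|q_nx\|$ form a fundamental system for $X$.

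For the implication ``$A$ generates a strongly continuous semigroup $T\Rightarrow$ the condition'': since $A\in L(X)$ (so $D(A)=X$), every orbit $t\mapsto T_tx$ is $C^\infty$ with $\frac{d^k}{dt^k}T_tx=T_tA^kx$, hence $\frac{d^k}{dt^k}\big|_{0}T_tx=A^kx$; as $X$ is barrelled, $T$ is locally equicontinuous, so for each $n$ there is $m$ with $T_s(\ker\pi_m)\subseteq\ker\pi_n$ for $0\le s\le1$, and for $x\in\ker\pi_m$ the smooth curve $t\mapsto T_tx$ takes its values in the closed subspace $\ker\pi_n$ on $[0,1]$, whence all its derivatives do, in particular $A^kx\in\ker\pi_n$ for every $k$. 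For the converse, assume the condition and put $U_tx:=\sum_{k\ge0}\frac{t^k}{k!}A^kx$; applying $q_n$ to a partial sum yields a partial sum of $\exp(t\widehat A_n)q_nx$ in the Banach space $X/N_n$, so the series is Cauchy, hence convergent, in $X$, with $q_nU_tx=\exp(t\widehat A_n)q_nx$. From the corresponding facts on the $X/N_n$ one reads off that $U$ is a semigroup, that $U_t\in L(X)$ with $p_n(U_tx)\le\|\rho_n\|\,e^{t\|\widehat A_n\|}p_m(x)$, that $U_t\to\mathrm{id}$ uniformly on bounded sets as $t\to0^+$ (using $\|\exp(t\widehat A_n)-\mathrm{id}\|\to0$), and that $A$ is the generator of $U$. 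Finally, if $T$ is \emph{any} strongly continuous semigroup with generator $A$, then for each $x$ the curve $g(t):=q_nT_tx$ in $X/N_n$ solves $g'=\widehat A_ng$, $g(0)=q_nx$, so $g(t)=\exp(t\widehat A_n)q_nx=q_nU_tx$ for all $n$, i.e.\ $T=U$; this proves (2), including uniform continuity of the generated semigroup.

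Now let $T$ be uniformly continuous on the quojection $X$ with generator $(A,D(A))$. As in the Banach case (using local equicontinuity and sequential completeness), $A$ is closed and densely defined, $R_\delta:=\int_0^\delta T_s\,ds\in L(X)$ for $\delta>0$, and $\operatorname{range}R_\delta\subseteq D(A)$ with $AR_\delta=T_\delta-\mathrm{id}$. The key claim is that \emph{$R_\delta$ is surjective for all sufficiently small $\delta>0$}. Granting this, $D(A)=\operatorname{range}R_\delta=X$, so $A\in L(X)$ by the closed graph theorem, and applying part (2) to the strongly continuous semigroup $T$ now generated by $A\in L(X)$ gives $T_t(x)=U_t(x)=\sum_k\frac{t^k}{k!}A^k(x)$, which is (1).

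To prove the key claim one exploits the quojection structure together with uniform continuity in the form $\tfrac1\delta R_\delta\to\mathrm{id}$ in $L_b(X)$ as $\delta\to0^+$. Passing to an equivalent (cofinal) fundamental system of seminorms one may assume $T_s(\ker\pi_{n+1})\subseteq\ker\pi_n$ for $0\le s\le1$; then for $\delta\le1$ the operator $\tfrac1\delta R_\delta$ induces $V^{(n)}_\delta\colon X_{n+1}\to X_n$ with $V^{(n)}_\delta\pi_{n+1}=\pi_n\circ\tfrac1\delta R_\delta$, and the lifting of bounded sets gives, for each $n$,
\[
\|V^{(n)}_\delta-\pi^n_{n+1}\|\le\tfrac1\delta\int_0^\delta\sup_{z\in D_{n+1}}p_n(T_sz-z)\,ds\longrightarrow 0\quad(\delta\to0^+).
\]
One then wants a single $\delta$ for which all $V^{(n)}_\delta$ are close enough to the connecting maps $\pi^n_{n+1}$ (which, after renormalising the representation, may be taken to be metric surjections) that a Mittag--Leffler / open mapping argument for the reduced projective spectrum forces $R_\delta$ onto $X=\varprojlim X_n$. \emph{The main obstacle is precisely to make these estimates uniform over the infinitely many steps of the spectrum at one and the same $\delta$}; this is exactly where the quojection hypothesis is indispensable, and where \cite{AlBoRi} had to impose extra assumptions — the analogous conclusion failing, for instance, on nuclear K\"othe spaces.
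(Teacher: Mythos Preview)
Your treatment of part (2) is correct and in fact quite clean: passing to the $A$-invariant quotients $X/N_n$, which the condition forces to be Banach spaces, lets you read off convergence of the exponential series and uniqueness of the semigroup from the Banach-space theory. The paper instead estimates $\|\pi_n(A^k x)\|_n\le\lambda^k\|\pi_m x\|_m$ directly by an induction using the lifting of bounded sets; the two arguments are equivalent, yours being slightly more conceptual.

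The genuine gap is in part (1). You reduce everything to the ``key claim'' that $R_\delta$ is surjective on $X$ for small $\delta$, and then you explicitly say that the obstacle is getting the estimates $\|V^{(n)}_\delta-\pi^n_{n+1}\|\to 0$ to hold \emph{uniformly in $n$} so that a single $\delta$ works for the whole spectrum --- but you never overcome that obstacle. Without it your Mittag--Leffler scheme does not start: you need one operator $R_\delta$ and hence one $\delta$, while your estimates only give, for each $n$ separately, an $n$-dependent threshold. This is exactly the difficulty \cite{AlBoRi} could not remove without extra hypotheses, and your proposal does not remove it either.

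The paper sidesteps this by \emph{not} proving global surjectivity of $R_\delta$ (or of $C_t=\tfrac1t R_t$) on $X$. Instead, for each fixed level $n_0$ it chooses $n\ge n_0$ and $m\ge n$ via local equicontinuity and shows that the induced map $\tilde C_t\colon X_m\to X_n$, $\pi_m(y)\mapsto\pi_n(C_t(y))$, is surjective for some small $t$ \emph{depending on $n$}: the lifting of bounded sets gives $\tilde C_t\to\pi^n_m$ in $L(X_m,X_n)$, and surjective operators form an open set there. Given $x\in X$ one then picks $y$ with $\pi_n(x)=\pi_n(C_t(y))$; since $C_t(y)\in D(A)$ and the choice of $n$ guarantees $\pi_{n_0}(T_h x)=\pi_{n_0}(T_h C_t(y))$, the difference quotients $\tfrac1h(T_h x-x)$ are Cauchy in the $n_0$-th coordinate. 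As $n_0$ was arbitrary, $A(x)$ exists. The point is that working coordinatewise allows the ``good'' $t$ to vary with the level, so no uniformity over the spectrum is ever needed.
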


\begin{proof}
   As $X$ is barrelled the semigroup is locally equicontinuous, so that, for every $t_0>0$ and $n\in\N$, there are $m\ge n$ and $c>0$ such that 
for all $x\in X$ and $t\in [0,t_0]$
\[
\|\pi_n(T_t(x))\|_n \le c \|\pi_m(x)\|_m.
\]
In particular, we have
\[
(\ast)\quad  \pi_m(x) =0 \;\Longrightarrow\; \pi_n(T_t(x))=0 \text{ for all } t\le t_0.
\]
As in the case of semigroups on Banach spaces it is easily seen that the Cesaro means 
\[ C_t(x)=\frac 1t \int_0^t T_s(y)ds\]
belong to $D(A)$, $A(C_t(y)) = \frac 1t (T_t(y)-y)$, and $C_t(y)\to y$ holds uniformly on bounded sets since $T$ is uniformly continuous. 
Moreover, $C_t$ satisfy the same continuity estimates as $T_t$.
Therefore, the following operators are well-defined   and continuous for $t\in (0,t_0]$:
\[
\tilde{C}_t: X_m \to X_n \; \pi_m(y) \mapsto \pi_n(C_t(y)).
\]
Since $\pi_m$ lifts bounded sets, every $z$ in the unit ball $B_m$ of $X_m$ can be represented as $z=\pi_m(y)$ with $y\in D_m$ and,
as $C_t \to id$ uniformly on $D_m$, we obtain that $\tilde C_t$ converges uniformly on $B_m$ to $\pi_m^n$.
Since the set of surjective operators is open in $L(X_m,X_n)$, see e.g.\
\cite[chapter XV, theorem 3.4]{Lang}, we conclude that $\tilde C_t$ is surjective for some sufficiently small $t>0$.

If now $ n_0\in \N$ is given we take $n\ge n_0$ with $\pi_n(z)=0$ $\Rightarrow$ $\pi_{n_0}(T_t(z))=0$ and then
 $m\ge n$ again with ($\ast$) for $t_0=1$, say.  Given $x\in X$ we choose $y\in X$ with $\pi_n(x)= \tilde C_t(\pi_m(y)) =\pi_n(C_t(y))$
so that $\pi_{n_0}(T_h(x))=\pi_{n_0}(T_h(C_t(y)))$ for small $h$.
Therefore, 
\[
\pi_{n_0}\left(\frac 1h (T_h(x)-x)\right) = \pi_{n_0}\left( \frac 1h ( T_h(C_t(y))-C_t(y))\right)
\]
 converges to $\pi_{n_0}(  \frac 1t (T_t(y)-y))$.

This shows that the difference quotients satisfy the Cauchy condition so that the completeness of $X$ implies that $A(x)$ is
defined for every $x\in X$. Moreover, $A$ is continuous either because of the closed graph theorem or because of 
$\pi_{n_0}(A(x))=  \pi_{n_0}(  \frac 1t (T_t(y)-y))$ together with the fact that $y$ can be chosen with $\|\pi_m(y)\|\le\tilde  c \|\pi_n(x)\|_n$
using that $\tilde C_t$ is open.

Since $A^k (x)$ is the $ k $th derivative of the orbit $t\mapsto T_t(x)$ at $0$ we obtain from ($\ast$) that $A$ satisfies the
  condition in the second part of the theorem (this argument uses only local equicontinuity and is thus true for strongly continuous
semigroups).

\medskip
We will now show that under the condition of (2) the exponential series \[\exp(tA)(x)=\sum\limits_{k =0}^\infty \frac{t^k }{k !}A^k (x)\]
converges absolutely and uniformly on bounded sets. Since then $\exp(tA)$ is a uniformly continuous semigroup with generator $A$ we have proved both parts of the theorem because 
a locally equicontinuous semigroup is uniquely determined by its generator.

We define linear maps $\tilde A_0=\pi_m^n$ and 
\[ \tilde A_k: X_m \to X_n, \; \pi_m(x) \mapsto \pi_n(A^k(x)) \]
so that $\tilde A_k \circ \pi_m =\pi_n \circ A^k$ and $\tilde A_{k+1}\circ \pi_m = \tilde A_k\circ \pi_m \circ A$.

As above we take a bounded set $D_m \subseteq X$ with $B_m\subseteq \pi_m(D_m)$. The continuity of $A$ then implies
$\pi_m \circ A(D_m) \subseteq \lambda B_m$ for some $\lambda>0$. We now  claim that
\[ 
\| \tilde A_k\|_{L(X_m,X_n)} \le \lambda^k
\]
for all $k\in \N_0$. For $k=0$ this holds because $\tilde A_0=\pi_m^n$ is norm decreasing. If the claim is true for some $k\in \N_0$
and $y\in B_m$ is given we choose $x\in D_m$ with $\pi_m(x)=y$ and obtain
\[
\|\tilde A_{k+1}(y)\|_n = \|\tilde A_{k+1}\circ \pi_m(x)\|_n = \|\tilde A_k (\pi_m(A(x)))\|_n \le \lambda^k \|\pi_m(A(x))\|_m \le \lambda^{k+1}.
\]
Finally, we obtain $\|\pi_n(A^k(x))\|_n = \|\tilde A_k(\pi_m(x))\|_n \le \lambda^k \|\pi_m(x)\|_m$ which clearly implies the
absolute convergence of the exponential series.
\end{proof}

The very strong form of the convergence of the exponential series shown at the end of the proof need not hold in  arbitrary Fr\'echet spaces.
The shift $T_t(f)(z)=f(z+t)$ on the space of entire functions is generated by $A(f)=f'$ and is of the form $\exp(tA)$ because of
the Taylor representation of entire functions. However, one cannot estimate $\| A^k(f)\|_n \le \lambda^k \|f\|_m$.

\medskip

Let us remark that part of the implication in (2), namely that a  strongly continuous semigroup whose generator is everywhere defined is uniformly continuous,
holds for all barrelled spaces. This follows easily from the Banach-Steinhaus theorem for $\{(T_h - id)/h: h\in[0,1]\}$. Moreover, the equicontinuity of this family characterizes semigroups with a continuous generator.

\medskip

The countability of the projective spectrum was only used to obtain barrelledness and that $\pi_m:X\to X_m$ lifts bounded sets. The theorem is
thus true for non-countable strict projective limits of Banach spaces which are barrelled and satisfy this lifting property. In particular,
it holds for arbitrary products
of Banach spaces. Moreover, as in \cite[proposition 3.4]{AlBoRi}  it is easy to extend the results to so-called prequojections $X$ where only the bidual $X''$
has a strict representation.

\medskip
Since strongly continuous,  locally equicontinuous semigroups on Montel spaces are easily seen to be uniformly continuous we 
immediately obtain the answer to Conejero's question:

\begin{corollary} Every strongly continuous semigroup on an arbitrary product $\K^I$ has a continuous generator $A$ and is of the form
$\exp(tA)$.
\end{corollary}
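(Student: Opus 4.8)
The plan is to deduce the Corollary from part~(1) of Theorem~\ref{main result}, using the remark following its proof that this part remains valid for arbitrary products of Banach spaces. Two points have to be supplied: that the extended theorem covers $\K^I$, and that on $\K^I$ every strongly continuous semigroup is automatically uniformly continuous, so that part~(1) can be invoked.

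For the first point, $\K^I=\prod_{i\in I}\K$ is a product of one-dimensional Banach spaces; such a product is barrelled and its projections onto finite subproducts lift bounded sets, so by the quoted remark the conclusion of Theorem~\ref{main result}(1) holds for every \emph{uniformly} continuous semigroup on $\K^I$. It therefore suffices to show that an arbitrary strongly continuous semigroup $T$ on $\K^I$ is uniformly continuous.

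To see this, I would first note that $\K^I$ is a product of finite-dimensional, hence Montel, spaces and so is itself Montel; in particular every bounded set $B\subseteq\K^I$ is relatively compact. Since $\K^I$ is barrelled, $T$ is locally equicontinuous, so $\{T_s:0\le s\le1\}$ is equicontinuous. I would then invoke the standard fact that an equicontinuous family of operators converging pointwise converges uniformly on precompact sets: given a continuous seminorm $p$ and $\varepsilon>0$, choose by equicontinuity a continuous seminorm $q\ge p$ with $p(T_s x)\le q(x)$ for all $s\in[0,1]$ and $x\in\K^I$, cover the compact set $\overline B$ by finitely many $q$-balls of radius $\varepsilon$ centred at $y_1,\dots,y_N$, and estimate, for $x\in B$ and a suitable index $j$,
\[
p\big((T_s-id)x\big)\le p\big(T_s(x-y_j)\big)+p(x-y_j)+p\big((T_s-id)y_j\big)\le 2\varepsilon+\max_{1\le l\le N}p\big((T_s-id)y_l\big).
\]
By strong continuity the finite maximum on the right tends to $0$ as $s\to0^+$, so $T_s\to id$ uniformly on $B$; thus $T$ is uniformly continuous.

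With uniform continuity established, Theorem~\ref{main result}(1) applies verbatim and yields that the generator $A$ is continuous and everywhere defined and that $T_t(x)=\exp(tA)(x)$ for all $x\in\K^I$ and $t\ge0$. I expect the only substantive step to be the upgrade from strong to uniform continuity --- i.e.\ combining the Montel property (bounded sets are precompact) with the equicontinuity coming from barrelledness; the rest is an immediate appeal to the theorem and the remarks surrounding it.
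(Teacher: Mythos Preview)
Your proposal is correct and follows exactly the paper's route: invoke the remark that Theorem~\ref{main result} extends to arbitrary products of Banach spaces, then use the Montel property of $\K^I$ together with local equicontinuity (from barrelledness) to upgrade strong to uniform continuity, and apply part~(1). You have simply written out in detail the step the paper calls ``easily seen'' (that strongly continuous, locally equicontinuous semigroups on Montel spaces are uniformly continuous), and your $\varepsilon$-estimate for that step is fine.
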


\begin{example}
\begin{rm}
	Consider the  backward shift $A:\omega \to \omega$, $x\mapsto (x_2,x_3,\ldots)$. Theorem \ref{main result} implies that 
           $A$ does not generate a strongly continuous semigroup on $\omega$. On the other hand,
	the operator $B(x)=(x_2,0,x_4,0,x_6,0,\ldots)$ satisfies $B^2=0$ as well as $(A-B)^2=0$ so that
	both operators do generate strongly continuous semigroups on $\omega$, given by $T(t)=id +tB$
	and $S(t)=id+t(A-B)$, respectively. It is thus not only the ``shape" of the matrix determined by the operator
which decides whether it generates a semigroup.
\end{rm}
\end{example}

Let us finally remark that the second part of theorem \ref{main result} implies that, 
if a continuous operator $A$ on a quojection generates a uniformly continuous semigroup, then the same is true for $A^2$.
This does not hold for the Fr\'echet space $\CC^\infty(\R)$: $A(f)=f'$ generates the shift semigroup but $B(f)=A^2(f)=f''$ does not
generate a strongly continuous semigroup on $\CC^\infty(\R)$ : 

The Gau{\ss}-Weierstra{\ss} semigroup $\tilde T$ on $\SSa(\R)$ given by the convolution with the Gau{\ss} 
kernel $k(t,x)=(\pi t)^{-1/2} \exp(-x^2/t)$ is uniformly continuous
(which one can check by Fourier transformation) and has the generator $\tilde B= B|_{\SSa(\R)}$. If $B$ would generate a semigroup $T$ on
$\CC^\infty(\R)$ we obtain, for fixed $f\in\SSa(\R)$ and $t>0$, that $\varphi(s)= T_s \circ \tilde T_{t-s}(f)$ has vanishing derivative
so that $T_t(f)=\varphi(t)=\varphi(0)=\tilde T_t(f)$. But this means that the convolution with the Gau{\ss} kernel can be continuously
extended from $\SSa(\R)$ to $\CC^\infty(\R)$ which is not true.

\bibliographystyle{amsalpha}
\bibliography{Semigroups}


\end{document}